\documentclass[12pt,reqno]{amsart}

\usepackage{amssymb}

\voffset    = 0.25in
\hoffset    = -0.5in
\textheight = 8.20in
\textwidth  = 6.00in

\newcommand{\F}{{\mathbb F}}
\newcommand{\N}{{\mathbb N}}

\newcommand{\seq}{\subseteq}

\newcommand{\iot}{\iota}

\newtheorem{lemma}{Lemma}
\newtheorem{theorem}{Theorem}
\newtheorem{corollary}{Corollary}

\newcommand{\refl}[1]{\ref{l:#1}}
\newcommand{\reft}[1]{\ref{t:#1}}

\newcommand{\refs}[1]{\ref{s:#1}}
\newcommand{\refb}[1]{\cite{b:#1}}
\newcommand{\refe}[1]{\eqref{e:#1}}

\author{Sergei V. Konyagin}
\email{konyagin@ok.ru}
\address{Steklov Mathematical Institute, 8 Gubkina St, Moscow 119991, Russia}
\thanks{The first author was supported by grants 08-01-00208 from the Russian
  Foundation for Basic Research and NSh-3233.2008.1 from the Program
  Supporting Leading Scientific Schools.}

\author{Vsevolod F. Lev}
\email{seva@math.haifa.ac.il}
\address{Department of Mathematics, The University of Haifa at Oranim,
         Tivon 36006, Israel}

\title[The Erd\H os-Tur\'an problem]%
  {The Erd\H os-Tur\'an problem \\ in infinite groups}

\dedicatory{Dedicated to Mel Nathanson on the occasion of his 60th birthday}

\begin{document}
\baselineskip 16pt

\begin{abstract}
Let $G$ be an infinite abelian group with $|2G|=|G|$. We show that if $G$ is
not the direct sum of a group of exponent $3$ and the group of order $2$,
then $G$ possesses a perfect additive basis; that is, there is a subset
$S\subseteq G$ such that every element of $G$ is uniquely representable as a
sum of two elements of $S$. Moreover, if $G$ \emph{is} the direct sum of a
group of exponent $3$ and the group of order $2$, then it does not have a
perfect additive basis; however, in this case there is a subset $S\subseteq
G$ such that every element of $G$ has at most two representations (distinct
under permuting the summands) as a sum of two elements of $S$. This solves
completely the Erd\H os-Tur\'an problem for infinite groups.

It is also shown that if $G$ is an abelian group of exponent $2$, then there
is a subset $S\subseteq G$ such that every element of $G$ has a
representation as a sum of two elements of $S$, and the number of
representations of non-zero elements is bounded by an absolute constant.
\end{abstract}

\maketitle

\section{The background}\label{s:bckg}

A subset of an abelian semigroup is called an \emph{additive basis of order
$2$}, or \emph{basis} for short, if every element of the semigroup is
representable as a sum of two elements of the subset. We say that a basis is
\emph{perfect} if every element is represented uniquely, up to the order of
summands. The representation function of a basis associates with each element
the number of its (ordered) representations as a sum of two elements from the
basis. If the semigroup can be embedded into an involution-free group, then
for a basis to be perfect it is necessary and sufficient that its
representation function is bounded by $2$.

A famous open conjecture of Erd\H os and Tur\'an \refb{et} is that every
basis of the semigroup $\N_0$ of non-negative integers has unbounded
representation function; that is, if $S\seq\N_0$ is a set such that each
non-negative integer is representable as a sum of two elements of $S$, then
there are integers with arbitrarily many representations.

Most investigations related to the Erd\H os-Tur\'an conjecture study
representation functions of bases of $\N_0$ (see~\refb{ns} for a survey) or
consider the analogous problem for infinite abelian semigroups other than
$\N_0$, and also for infinite families of abelian semigroups (see~\refb{hh}).
In the present paper we are concerned with the latter line of research.

There are several noticeable cases where bases with bounded representation
functions are known to exist. As an example, Nathanson~\refb{n} proved that
the group of integers possesses a perfect basis. Ruzsa~\refb{r} showed that
if $p$ is a prime with $\Big(\frac{\;2\,}p\Big)=-1$, then the group
$\F_p\times\F_p$ possesses a basis such that every group element has at most
$18$ representations as a sum of two elements of this basis. (Here and below
for a prime $p$ we denote by $\F_p$ the finite field with $p$ elements. To
simplify the notation, we occasionally identify a field with its additive
group.) As a corollary, Ruzsa derived a result~\cite[Theorem~1]{b:r} which
easily implies that every finite cyclic group has a basis whose
representation function is bounded by an absolute constant, independent of
the order of the group. The approach of~\refb{r} was further developed by
Haddad and Helou~\refb{hh} to show that for any finite field $\F$ of odd
characteristic, the group $\F\times\F$ has a basis whose representation
function does not exceed $18$. In the case where $\F$ is a finite field of
characteristic $2$, a basis in $\F\times\F$ with a bounded representation
function was constructed in~\cite[Lemma 1]{b:gdt}, though the property we are
interested in has never been identified explicitly to our knowledge.

\section{The results}

For a subset $C$ of an abelian group and an integer $n\ge 1$ we write
  $$ nC := \{ nc \colon c \in C \}. $$
Our main result is
\begin{theorem}\label{t:inf}
Let $G$ be an infinite abelian group with $|2G|=|G|$.
\begin{itemize}
\item[(i)]  If $G$ is not the direct sum of a group of exponent $3$ and
    the group of order $2$, then $G$ has a perfect basis.
\item[(ii)] If $G$ is the direct sum of a group of exponent $3$ and the
    group of order $2$, then $G$ does not have a perfect basis, but has a
    basis such that every element of $G$ has at most two representations
    (distinct under permuting the summands) as a sum of two elements of
    the basis.
\end{itemize}
\end{theorem}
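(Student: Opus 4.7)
My plan for (i) is to construct the basis $S$ by transfinite recursion. Well-order $G$ as $\{g_\alp : \alp < \kappa\}$ with $\kappa := |G|$ and inductively build an increasing chain of partial bases $S_\alp \seq G$, each of cardinality $< \kappa$, whose (ordered) representation function takes values only in $\{0,1,2\}$, with $r(g) = 2$ permitted only when the two representations come from a single unordered pair $\{s_1, s_2\}$ with $s_1 \neq s_2$, and such that every $g_\bet$ with $\bet < \alp$ is already represented. At a successor stage, if $g_\alp$ is not already represented, I adjoin a pair $\{s, g_\alp - s\}$ for some suitably chosen $s \in G$; limit stages are handled by taking unions.

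The core of the argument is the extension step. Writing $S := S_\alp$, enlarging $S$ to $S \cup \{s, g_\alp - s\}$ introduces the new sums $s+t$ and $(g_\alp - s)+t$ for $t \in S$, the diagonals $2s$ and $2(g_\alp - s)$, and $g_\alp$ itself. Requiring that none of these coincide with an already-existing unordered representation splits into ``linear'' constraints, forbidding $s$ from a set of fewer than $|S|$ specific values in $G$, and ``doubling'' constraints, forbidding $2s$ from a set of fewer than $|S|$ specific values in $2G$. The hypothesis $|2G|=|G|$ enters decisively: it lets me first fix $2s$ in $2G$ avoiding the small forbidden subset, yielding $|G|$ admissible cosets of $G[2] := \{x : 2x = 0\}$ from which to draw $s$, and then pick $s$ inside one of the (still $|G|$-many) cosets that miss the linear forbidden set entirely.

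Several degenerate configurations demand separate care, and navigating them is where I anticipate the main obstacle. For instance: when $g_\alp$ has order $2$ the pair $\{s, -s\}$ is an honest pair only when $2s \neq g_\alp$; when $g_\alp$ is itself a doubling $2h$ one is tempted to take the diagonal $s = g_\alp - s = h$, which constrains everything downstream; and when the $3$-torsion of $G$ is large, the relation $3s = 0$ spawns unintended double representations through the identity $s + (-s) = 0 = s + s + s$. A methodical case split on the $2$- and $3$-torsion (and on the exponent) of $G$ should show that the construction succeeds in every case except the exceptional configuration $G = H_3 \oplus H_2$ of part (ii).

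For part (ii), the non-existence of a perfect basis in $G = H_3 \oplus H_2$ is proved by a direct structural argument. Writing any prospective perfect basis $S$ as $A \times \{0\} \cup B \times \{1\}$ with $A, B \seq H_3$, the uniqueness requirement on representations in the layer $H_3 \times \{1\}$ forces $(a, b) \mapsto a + b$ to be a bijection $A \times B \to H_3$, while the uniqueness requirement in the layer $H_3 \times \{0\}$ forces the unordered pair-sums of $A$ and of $B$ to partition $H_3$; exploiting $2x = -x$ in $H_3$ and comparing these two requirements yields a contradiction. The positive statement of (ii) then follows by rerunning the construction of (i) with the weakened requirement that each element have at most two unordered representations; this extra degree of freedom is exactly what bypasses the obstruction just identified.
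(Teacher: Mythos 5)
Your overall architecture for part (i) --- transfinite recursion, adjoining a pair $\{s,\,g-s\}$ at each successor stage and counting forbidden values --- is the same as the paper's, but your taxonomy of constraints omits the one that carries essentially all of the difficulty. When you adjoin $s$ and $t=g-s$ to a partial basis $S$, preventing the new diagonal $2s$ from colliding with a new cross sum $t+a$, $a\in S$, is the condition $2s-t\notin S$, i.e.\ $3s\notin g+S$ (and symmetrically $3s\notin 2g-S$). This is neither a ``linear'' constraint forbidding fewer than $|S|$ values of $s$ nor a ``doubling'' constraint on $2s$: it forbids $s$ from a union of $|S|$ cosets of $G_3=\{x\colon 3x=0\}$, which can have cardinality $|G|$ when $|G_3|=|G|$, and it cannot be converted into a condition on $2s$. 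Your two-step selection (first choose $2s$ in $2G$, then choose $s$ within a coset of $G[2]$) never sees this constraint, and $2s$ and $3s$ cannot be prescribed independently of one another. The paper's proof is concentrated exactly here: a separate lemma shows that if $\max\{|A|,|B|\}<\min\{|2G|,|3G|\}$ then some $s$ satisfies $2s\notin A$ and $3s\notin B$ \emph{simultaneously} (settling the case $|3G|=|G|$), while for $|3G|<|G|$ an additional invariant must be maintained throughout the whole recursion --- all elements of the partial basis lie in distinct cosets of $3G$ --- so that the forbidden set for $3s$ meets $3G$ in at most two points, an argument that still needs $|3G|\ge 3$. The residual cases $|3G|\le 2$ are precisely exponent $3$ and the exceptional group of part (ii); for exponent $3$ (which belongs to part (i) and which your recursion does not demonstrably handle) the paper abandons the recursion entirely and exhibits the perfect basis $\{(x,x^2)\}$ after identifying $G$ with $\F\times\F$ for an algebraically closed field $\F$ of characteristic $3$. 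Your proposal supplies none of these mechanisms; the sentence ``a methodical case split should show that the construction succeeds'' is standing in for the actual content of the theorem.

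For part (ii), your impossibility sketch is also incomplete. The comparison you describe (a bijection $A\times B\to H_3$ versus a partition of $H_3$ by the unordered pair-sums of $A$ and of $B$) has a natural counting completion, namely $|A||B|=\binom{|A|+1}{2}+\binom{|B|+1}{2}$, forcing $(|A|-|B|)^2+|A|+|B|=0$; but this is vacuous when $H_3$ is infinite, which is the only case the theorem concerns. The correct argument is local rather than global: normalize $0\in A$, take the unique representation $h=a+(h+b)$, deduce $b=-a$, and observe that $2(h+b)=-b=a=a+0$ gives a second unordered representation of $a$. Finally, for the positive half of (ii) there is no need to rerun the recursion with a relaxed requirement: if $S$ is a perfect basis of the exponent-$3$ summand $F$, then $S\cup(h+S)$ is already a basis of $G$ with at most two unordered representations per element.
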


Clearly, if $G$ is an infinite abelian group with $|2G|<|G|$, then for any
basis $S$ of $G$ (and indeed, for any subset $S\seq G$ with $|S|=|G|$) there
is an element of $G$ having as many as $|G|$ representations of the form $2s$
with $s\in S$. In particular, this applies to infinite abelian groups of
exponent $2$. Similarly, for no infinite family of groups of exponent $2$ can
one find bases with uniformly bounded representation functions, even if the
groups of the family are finite. We show that, nevertheless, efficient bases
in such groups do exist if we exclude the zero element from consideration.
\begin{theorem}\label{t:exp2}
Each abelian group of exponent $2$ possesses a basis such that every non-zero
element of the group has at most $36$ representations as a sum of two
elements of this basis.
\end{theorem}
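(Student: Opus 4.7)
The plan is to produce an explicit basis by identifying $G$ (at least when it is infinite) with the additive group of $\F \oplus \F$ for a suitably rich field $\F$ of characteristic~$2$, and then taking $S$ to be the union of two graphs of the cubing map. For such $\F$, we propose
$$
S := \{(x,x^3) \colon x \in \F\} \cup \{(y^3,y) \colon y \in \F\} \seq \F\oplus\F,
$$
and count ordered representations $(a,b) = s_1 + s_2$ by splitting into four types according to which of the two defining subsets contains each summand. For the two \emph{pure} types, the system $x_1+x_2 = a$, $x_1^3+x_2^3 = b$ reduces --- using the identity $x_1^3+x_2^3 = (x_1+x_2)((x_1+x_2)^2 + x_1 x_2) = a(a^2+x_1 x_2)$ valid in characteristic~$2$ --- to a quadratic in one unknown, giving at most two ordered solutions whenever $a \ne 0$. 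For each of the two \emph{mixed} types, substituting $x = a+y^3$ yields the single-variable polynomial equation $y^9 + ay^6 + a^2 y^3 + y + (a^3+b) = 0$ of degree~$9$, contributing at most nine ordered solutions. Summing over the four types shows that every non-zero $(a,b)$ has at most $2+2+9+9 = 22 \le 36$ ordered representations.

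For $S$ to be a genuine basis, each of these polynomial equations must have a root in $\F$ whenever $(a,b) \ne (0,0)$; this becomes automatic when $\F$ is algebraically closed, since the degree-$9$ mixed polynomial then has at least one root. Given any infinite abelian group $G$ of exponent~$2$, we realize $G$ as $(\F\oplus\F,+)$ by choosing $\F$ to be the algebraic closure of $\F_2(X_i \colon i \in I)$ for an index set $I$ with $|I| = |G|$. The resulting $\F$ is an algebraically closed field of characteristic~$2$ with $\dim_{\F_2}(\F) = |G|$, so $\F\oplus\F$ has $\F_2$-dimension $|G|$ and is abstractly isomorphic to $G$ as an abelian group.

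The principal obstacle is the \emph{finite} case, where $\F$ cannot be taken algebraically closed and the degree-$9$ polynomial may be unsolvable in $\F$, so the construction above need not yield a basis. Groups with $|G| \le 36$ are disposed of trivially by setting $S = G$. For a larger finite $G = \F_2^n$ with $n$ even, we would appeal to the construction of \cite[Lemma~1]{b:gdt} for $\F \times \F$ over a finite field of characteristic~$2$, making explicit the uniform boundedness of its representation function (a property which, as the introduction notes, has not been identified in the literature). For odd $n$, we would decompose $G = H \oplus \F_2$ with $H$ of even dimension and combine a \cite{b:gdt}-basis of $H$ with a small auxiliary set accounting for the extra $\F_2$-coordinate; keeping the joint representation function within the bound $36$ across this gluing is the most delicate bookkeeping of the argument.
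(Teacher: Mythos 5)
Your treatment of the infinite case is correct and genuinely different from the paper's. The paper reduces (via its Lemma~\refl{FF}) to $\F\times\F$ and uses the union of three hyperbolas $S_i=\{(x,d_i/x)\colon x\in\F^\times\}$ with $d_1+d_2+d_3=0$, getting each $r_{ij}\le 2$ and hence a bound of $18$; your union of the two cubing graphs gives $2+2+9+9=22$, and solvability is immediate from algebraic closure via the degree-$9$ equation. Both are fine for infinite $G$ (your identification $G\cong\F\oplus\F$ is exactly the content of the paper's Lemma~\refl{FF}). One small caveat: for the second pure type the quadratic reduction requires $b\ne 0$, not $a\ne 0$, but when $b=0$ that type contributes nothing, so the bound $22$ stands for all non-zero $(a,b)$.

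The genuine gap is the finite case, which the theorem does cover and which you explicitly leave open. Deferring to \cite[Lemma~1]{b:gdt} is not a proof here: as the paper's introduction points out, the bounded-representation property of that construction has never been identified in the literature, so it is precisely the thing that must be verified --- and verifying it is the main content of the paper's argument in the finite case. Concretely, with $S=S_1\cup S_2\cup S_3$ as above, representability of $(u,v)$ with $uv\ne 0$ reduces to solvability of $(x/u)^2+(x/u)=d_i/(uv)$ for some $i$; over a finite field of characteristic $2$ the Artin--Schreier map $x\mapsto x+x^2$ has image of index $2$ (an $\F_2$-hyperplane), so among the three elements $d_1/(uv),d_2/(uv),d_3/(uv)$, which sum to zero, at least one lies in the image. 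This is exactly why three hyperbolas with $d_1+d_2+d_3=0$ are used, and it has no evident analogue for your cubing construction, whose degree-$9$ equation can indeed fail to have roots over a finite field. The odd-dimensional gluing $S'=S\cup(e+S)$ for $G=H\oplus\{0,e\}$ is in fact routine --- it exactly doubles the representation function, which is where the paper's $36=2\cdot 18$ comes from --- but note that doubling your bound of $22$ would give $44>36$, so even granting the gluing you would need the $18$-bound construction, not yours, in the finite case. As written, the proposal proves the theorem only for infinite groups.
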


Combined with the result of Haddad and Helou and the corollary of Ruzsa's
result, mentioned in Section~\refs{bckg}, Theorems~\reft{inf} and~\reft{exp2}
readily yield
\begin{corollary}
Let $G$ be an abelian group. If $G$ is either infinite with $|2G|=|G|$, or
has prime exponent, then it possesses a basis with the representation
function bounded by an absolute constant (independent of the group), except
for the value of the function on the zero element in the case where $G$ is of
exponent $2$.
\end{corollary}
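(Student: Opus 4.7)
The plan is to handle the corollary by case analysis, routing each type of group to an appropriate result from the excerpt. First, if $G$ is infinite with $|2G|=|G|$, then Theorem~\reft{inf} applies directly: part~(i) gives a perfect basis, whose representation function is bounded by~$2$, while the exceptional part~(ii) gives a basis with at most two unordered representations of each element, hence at most~$4$ ordered representations. If $G$ has exponent~$2$, Theorem~\reft{exp2} supplies a basis with at most~$36$ representations of every non-zero element. The remaining case is that $G$ has odd prime exponent~$p$; here multiplication by~$2$ is an automorphism, so $|2G|=|G|$, and if $G$ is infinite, Theorem~\reft{inf}(i) applies because a group of odd prime exponent cannot be the direct sum of an exponent-$3$ group and the group of order~$2$.

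It thus remains to handle finite $G$ of odd prime exponent~$p$, so that $G\cong\F_p^n$ additively for some $n\ge 1$. For $n=1$ the group is cyclic and the corollary of Ruzsa's theorem cited in Section~\refs{bckg} furnishes a basis whose representation function is bounded by an absolute constant. For even $n=2m$ I would identify $G$ with the additive group of $\F_{p^m}\times\F_{p^m}$ and invoke the Haddad--Helou theorem, obtaining the bound~$18$. For odd $n\ge 3$, which is the one configuration not already settled, I would decompose $G=G_1\oplus G_2$ with $G_1\cong\F_p$ and $G_2\cong\F_p^{n-1}$ (of even dimension), choose a Ruzsa basis $S_1\seq G_1$ and a Haddad--Helou basis $S_2\seq G_2$, and then take $S=S_1\times S_2\seq G$.

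For this last step a brief verification is needed: for any $(g_1,g_2)\in G_1\oplus G_2$, ordered representations of $(g_1,g_2)$ as a sum of two elements of $S$ correspond bijectively to pairs consisting of an ordered representation of $g_1$ in $S_1$ and an ordered representation of $g_2$ in $S_2$. Thus $S$ is a basis of $G$, and its representation function is at most the product of the two individual bounds, which is itself an absolute constant. This product construction is essentially the only non-bookkeeping step and the only real (very mild) obstacle; everything else in the argument is a direct citation of a previously stated result in the appropriate case.
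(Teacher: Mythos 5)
Your proof is correct and follows the route the paper intends: the paper gives no explicit argument for this corollary beyond citing Theorems~\reft{inf} and~\reft{exp2} together with the Haddad--Helou result and the corollary of Ruzsa's theorem, and your case analysis is exactly that combination. The one detail the paper leaves implicit --- the group $\F_p^n$ with $p$ odd and $n\ge 3$ odd, which you handle by taking the product of a Ruzsa basis of $\F_p$ with a Haddad--Helou basis of $\F_p^{n-1}$ and noting that the representation function multiplies across the direct sum --- is correctly identified and verified.
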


We notice that excluding the zero element for groups of exponent $2$ is
equivalent to disregarding representations with equal summands. To our
present knowledge, a universal constant $K$ may exist with the property that
each abelian group possesses a basis such that every element of the group has
at most $K$ representations as a sum of two distinct elements of this basis.

\section{The proofs}\label{s:proofs}

In this section we use the word ``basis'' both in the above-defined and
linear-algebraic meaning, adding the attribute \emph{linear} in the latter
case to avoid confusion.

Our argument depends on the axiom of choice, which we assume for the rest of
the paper.

To handle infinite groups of exponents $2$ and $3$, we need the following
lemma.
\begin{lemma}\label{l:FF}
If $G$ is an infinite abelian group of prime exponent $p$, then there exists
an algebraically closed field $\F$ of characteristic $p$ such that
$G\cong\F\times\F$.
\end{lemma}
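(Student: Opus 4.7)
The plan is to reduce everything to the theory of vector spaces over $\F_p$, where linear-algebraic dimension considerations make the isomorphism transparent.

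First I would observe that an abelian group of prime exponent $p$ is the same thing as an $\F_p$-vector space, so both $G$ and $\F\times\F$ (for any field $\F$ of characteristic $p$) are $\F_p$-vector spaces; an isomorphism of abelian groups is the same as an $\F_p$-linear isomorphism. Using the axiom of choice, every such vector space has a linear basis, and two vector spaces over the same field are isomorphic if and only if they have the same dimension. So the task reduces to producing an algebraically closed field $\F$ of characteristic $p$ with $\dim_{\F_p}(\F\times\F)=\dim_{\F_p} G$.

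Next I would compute both dimensions in terms of cardinalities. Writing $\kappa:=|G|$ (which is infinite by hypothesis), a counting of finite $\F_p$-linear combinations shows that an $\F_p$-vector space with infinite basis of cardinality $\lambda$ has total cardinality $\lambda$; hence $\dim_{\F_p} G=\kappa$. Similarly, for any algebraically closed field $\F$ of characteristic $p$, infiniteness of $\F$ (automatic since $\F\supseteq\overline{\F_p}$) gives $\dim_{\F_p}\F=|\F|$, and $\dim_{\F_p}(\F\times\F)=2\,\dim_{\F_p}\F=|\F|$ by infinite cardinal arithmetic.

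Therefore it suffices to exhibit, for each infinite cardinal $\kappa$, an algebraically closed field $\F$ of characteristic $p$ with $|\F|=\kappa$. For $\kappa=\aleph_0$ take $\F=\overline{\F_p}$; for uncountable $\kappa$ take the algebraic closure of a purely transcendental extension $\F_p(x_i:i\in I)$ with $|I|=\kappa$, noting that passage to the algebraic closure does not change the cardinality of an infinite field. With such an $\F$ in hand, $G$ and $\F\times\F$ are $\F_p$-vector spaces of equal dimension $\kappa$, hence isomorphic.

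The argument is essentially routine once one adopts the vector-space point of view; the only mildly delicate point, which I expect to be the main step to state carefully, is the cardinal arithmetic guaranteeing that an algebraically closed field of any prescribed infinite cardinality exists in characteristic $p$, and that $\dim_{\F_p}\F$ equals $|\F|$ rather than something smaller. Both facts are standard consequences of the axiom of choice that the paper has already invoked.
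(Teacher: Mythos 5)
Your proposal is correct and follows essentially the same route as the paper: both arguments view $G$ as an $\F_p$-vector space, reduce the isomorphism to a cardinality/dimension count using the facts that an infinite vector space over a finite field has the cardinality of its basis and that an infinite cardinal equals its double, and both realize $\F$ as the algebraic closure of a purely transcendental extension of $\F_p$ of the prescribed cardinality. The only cosmetic difference is that the paper phrases the dimension count via an explicit splitting $G\cong G_1\times G_2$ of a basis into two halves, while you invoke $\dim_{\F_p}(\F\times\F)=\dim_{\F_p}\F$ directly.
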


The proof uses several facts, well known from algebra and set theory; namely,
\begin{itemize}
\item[(i)]   every vector space has a linear basis;
\item[(ii)]  an infinite set can be partitioned into two disjoint subsets
    of equal cardinality;
\item[(iii)] an infinite vector space over a finite field has the same
    cardinality as any of its linear bases;
\item[(iv)]  the field of rational functions over a finite field in the
    variables, indexed by the elements of an infinite set, has the same
    cardinality as this set;
\item[(v)]   the algebraic closure of an infinite field has the same
    cardinality as the field itself.
\end{itemize}
We notice that (i) follows easily from Zorn's lemma, while (ii)--(v) are not
difficult to derive from the basic set theory result saying that for any
infinite cardinal $m$, a union of at most $m$ sets, each of cardinality at
most $m$, has cardinality at most $m$.

\begin{proof}[Proof of Lemma \refl{FF}]
Considering $G$ as a vector space over the field $\F_p$, find a linear basis
$B$ of $G$ and fix a partition $B=B_1\cup B_2$, where $B_1$ and $B_2$ are
disjoint subsets of equal cardinality. For $i\in\{1,2\}$ denote by $G_i$ the
group of functions from $B_i$ to $\F_p$ with a finite support; thus,
$G_1\cong G_2$, and since $G$ is isomorphic to the group of functions from
$B$ to $\F_p$ with a finite support, we have $G\cong G_1\times G_2$. Let $\F$
be the algebraic closure of the field of rational functions over $\F_p$ in
the variables, indexed by the elements of $G_1$. By (iv) and (v), the
cardinality of $\F$ is equal to the cardinality of $G_1$. From (iii) we
conclude now that every linear basis of $\F$ has the same cardinality as
$B_1$ which, we recall, is a linear basis of $G_1$. Any bijection from $B_1$
to a linear basis of $\F$ determines a group isomorphism between $G_1$ and
the additive group of $\F$. As a result, we have $G_1\cong\F$, and hence also
$G_2\cong\F$, implying the assertion.
\end{proof}

For an abelian group $G$, an integer $n\ge 1$, and subsets $A,B,C\seq G$ we
write
\begin{align*}
  G_n    &:= \{g\in G\colon ng=0\}, \\
  A\pm B &:= \{ a\pm b \colon a\in A,\, b\in B \},
\intertext{and}
  A+B-C  &:= \{ a+b-c\colon a\in A,\, b\in B,\, c\in C \}.
\end{align*}
From $G/G_n\cong nG$ we conclude that if $|G|$ is infinite, then
$\max\{|G_n|,|nG|\}=|G|$.

Yet another result used in the proof of Theorem \reft{inf} is
\begin{lemma}\label{l:2G3G}
Let $G$ be an abelian group such that $2G$ is infinite. If $A,B\seq G$
satisfy
  $$ \max\{|A|,|B|\} < \min\{|2G|,|3G|\}, $$
then there exists an element $s\in G$ with $2s\notin A$ and
 $3s\notin B$.
\end{lemma}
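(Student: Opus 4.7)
I will show that the set of $s\in G$ violating at least one of the two conditions is a proper subset of $G$. Write $X:=\{s\in G\colon 2s\in A\}$ and $Y:=\{s\in G\colon 3s\in B\}$; each is a union of cosets of a subgroup, since $X$ is the preimage of $A\cap 2G$ under the homomorphism $s\mapsto 2s$ (hence a union of at most $|A|$ cosets of $G_2$), and similarly for $Y$. Consequently $|X|\le|A|\cdot|G_2|$ and $|Y|\le|B|\cdot|G_3|$. The identity $|G|=\max\{|G_n|,|nG|\}$ stated just before the lemma, which holds for every $n\ge 1$ because $|G|$ is infinite, is the structural fact driving the case analysis.

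In the easy case, both $|G_2|<|G|$ and $|G_3|<|G|$; this forces $|2G|=|3G|=|G|$, and together with the hypothesis yields $|A|,|B|<|G|$ as well. Each of $|A|\cdot|G_2|$ and $|B|\cdot|G_3|$ is then a product of two cardinals strictly below the infinite cardinal $|G|$, hence strictly below $|G|$ itself. Therefore $|X\cup Y|<|G|$, and any element of $G\setminus(X\cup Y)$ provides the desired~$s$.

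The main obstacle is the opposite situation, where $|G_2|=|G|$ or $|G_3|=|G|$, since then the crude coset counts degenerate. When $|G_2|=|G|$, I first choose $v\in 2G\setminus A$---possible because $|A|<|2G|$---and fix $s_0$ with $2s_0=v$. Every $s=s_0+h$ with $h\in G_2$ then satisfies $2s=v\notin A$; the identity $3h=h$ (valid because $2h=0$) gives $3s=3s_0+h$, so $h\mapsto 3(s_0+h)$ is a bijection $G_2\to 3s_0+G_2$. The number of $h\in G_2$ for which $3(s_0+h)\in B$ is therefore at most $|B|<|2G|\le|G_2|$, and any $h$ outside this exceptional set yields a valid $s$. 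The case $|G_3|=|G|$ is symmetric: I choose $w\in 3G\setminus B$, fix $s_0$ with $3s_0=w$, and use that $h\mapsto 2h$ is \emph{injective} on $G_3$ (its kernel being $G_2\cap G_3=\{0\}$) to bound the number of $h\in G_3$ with $2(s_0+h)\in A$ by $|A|<|3G|\le|G_3|$.
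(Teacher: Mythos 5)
Your proof is correct, but it follows a genuinely different route from the paper's. You argue directly, with a case split on the sizes of the torsion subgroups: when $|G_2|<|G|$ and $|G_3|<|G|$ you observe that the bad sets $X=\{s\colon 2s\in A\}$ and $Y=\{s\colon 3s\in B\}$ are unions of few cosets of $G_2$ and $G_3$ respectively, so $|X\cup Y|<|G|$; when, say, $|G_2|=|G|$ you freeze $2s$ at a value in $2G\setminus A$ and perturb $s$ inside a coset of $G_2$, exploiting that tripling acts as the identity on $G_2$ (and, symmetrically, that doubling is injective on $G_3$ because $G_2\cap G_3=\{0\}$). The paper instead gives a single contradiction argument with no case analysis: assuming every $s$ satisfies $2s\in A$ or $3s\in B$, it fixes $w$ with $3w\notin B$ to get $2G_3\seq -2w+A$, writes every $s$ with $3s\in B$ as $u+g$ with $g\in G_3$ and $u$ drawn from a fixed set $U$ of preimages of $B$ under tripling, and concludes that $2G$ is covered by $A\cup(-2w+2U+A)$, a union of two sets each of cardinality below $|2G|$. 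Both arguments rest on the identity $\max\{|G_n|,|nG|\}=|G|$ and on the fact that an infinite cardinal is not a small union of smaller sets; yours is longer but each case is elementary and transparent, while the paper's is more compact and uniform. All of your individual claims (the identity $3h=h$ on $G_2$, the injectivity of doubling on $G_3$, and the inequalities $|B|<|2G|\le|G_2|$ and $|A|<|3G|\le|G_3|$ in the respective sub-cases) check out, and your two cases are exhaustive since $|G_n|\le|G|$ always holds.
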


\begin{proof}
Suppose for a contradiction that for every $s\in G$ we have either $2s\in A$,
or $3s\in B$. Without loss of generality we assume $B\seq 3G$, and we find
then a subset $U\seq G$ with $|U|=|B|$ and $B=\{3u\colon u\in U\}$.

Fix $w\in G$ with $3w\notin B$. For any $g\in G_3$ we have
 $3(w+g)=3w\notin B$, whence $2(w+g)\in A$ and therefore $2g\in-2w+A$. Now if
$s\in G$ satisfies $3s\in B$, then $s=u+g$ with some $u\in U$ and $g\in G_3$,
implying $2s=2u+2g\in 2u-2w+A\seq -2w+2U+A$. It follows that for every
 $s\in G$ we have either $2s\in -2w+2U+A$, or $2s\in A$; this, however, is
impossible as $|-2w+2U+A|\le|U||A|<|2G|$ and $|A|<|2G|$ by the assumptions.
\end{proof}

Eventually, we are ready to prove Theorem \reft{inf}.
\begin{proof}[Proof of Theorem \reft{inf}]
We split the proof into three parts.

\smallskip\noindent
{\bf 1.\,} First, suppose that $G$ is of exponent $3$. By Lemma~\refl{FF}, we
can assume $G=\F\times\F$, where $\F$ is an algebraically closed field of
characteristic $3$. Set
  $$ S := \{(x,x^2)\colon x\in\F\}. $$
For each pair $(u,v)\in G$, the number of representations of $(u,v)$ as a sum
of two elements of $S$ is the number of solutions of the equation
  $$ x^2 + (u-x)^2 = v;\quad x\in \F, $$
which is either $1$, or $2$. The assertion follows.

We remark that this argument above actually goes through for any odd prime
exponent; however, only the case of exponent $3$ is not covered by the proof
below.

\smallskip\noindent
{\bf 2.\,} Now suppose that $G=F\oplus\{0,h\}$, where $F$ is of exponent $3$
and $h$ has order $2$. As shown above, $F$ has a perfect basis $S$, and it is
immediate that $S\cup(h+S)$ is a basis of $G$ such that every element of $G$
has at most two representations (distinct under permuting the summands) as a
sum of two elements of this basis.

Assuming, on the other hand, that $G$ possesses a \emph{perfect} basis, we
write this basis as $T=T_0\cup(h+T_1)$ with $T_0,T_1\seq F$. Shifting $T$
appropriately, we assume furthermore that $0\in T_0$. To obtain a
contradiction we observe that the unique representation of $h$ as a sum of
two elements of $T$ has the form $h=t_0+(h+t_1)$ with $t_0\in T_0$ and
$t_1\in T_1$; hence, $2(t_1+h)=t_0+0$ gives two representations of $t_0$ as a
sum of two elements of $T$.

\smallskip\noindent
{\bf 3.\,} Turning to the general case, we denote by $\mu$ the initial
ordinal of the cardinal $|G|$ and consider a well-ordering
 $G=\{g_\iot\colon \iot<\mu\}$. Notice, that $\mu$ is a limit ordinal, and
hence the successor of any ordinal, smaller than $\mu$, is also smaller than
$\mu$.

We set $S_0:=\varnothing$ and construct a chain of subsets $S_\iot$, for each
ordinal $\iot\le\mu$, so that
\begin{itemize}
\item[--] $S_\iot\seq S_\rho$ whenever $\iot<\rho\le\mu$;
\item[--] if $\iot$ is a finite ordinal, then $S_\iot$ is finite, and if
    $\iot\le\mu$ is infinite, then $|S_\iot|\le|\iot|$;
\item[--] $g_\iot\in S_\rho+S_\rho$ whenever $\iot<\rho\le\mu$;
\item[--] for any ordinal $\iot\le\mu$ and element $g\in G$ there is at
    most one representation of $g$ as a sum of two elements of $S_\iot$.
\end{itemize}
The proof is then completed by observing that $S_\mu$ is a perfect basis of
$G$; hence, it suffices to show that the subsets $S_\iot$ can be constructed.

We use transfinite recursion, assuming that $\nu\le\mu$ and that $S_\iot$ has
already been found for each ordinal $\iot<\nu$, and constructing $S_\nu$. If
$\nu$ is a limit ordinal, then we put $S_\nu:=\cup_{\iot<\nu} S_\iot$. If
$\nu$ is a successor ordinal and $g_{\nu-1}\in S_{\nu-1}+S_{\nu-1}$, then we
put $S_\nu:=S_{\nu-1}$. In the remaining case where $\nu$ is a successor
ordinal and $g_{\nu-1}\notin S_{\nu-1}+S_{\nu-1}$ we put
$S_\nu:=S_{\nu-1}\cup\{s,t\}$, where $s,t\in G$ with $s+t=g_{\nu-1}$ are
chosen to satisfy the following conditions:
\begin{itemize}
\item[(a)] if $|3G|<|G|$, then $s,t\notin S_{\nu-1}+3G$;
\item[(b)] $s,t\notin S_{\nu-1}+S_{\nu-1}-S_{\nu-1}$;
\item[(c)] $2s,2t\notin S_{\nu-1}+S_{\nu-1}$;
\item[(d)] $s-t\notin S_{\nu-1}-S_{\nu-1}$;
\item[(e)] $2s-t,2t-s\notin S_{\nu-1}$.
\end{itemize}
Condition (a) is of technical nature and its exact purpose will be clarified
in the following paragraph, while the last four conditions ensure that the
unique representation property of $S_{\nu-1}$ is inherited by $S_\nu$. Thus,
to complete the proof it suffices to show that $s$ and $t:=g_{\nu-1}-s$
satisfying (a)--(e) can be found.

To this end we first observe that if $S_{\nu-1}$ is infinite, then condition
(e) excludes at most $|S_{\nu-1}|\le|\nu-1|<|\mu|=|G|$ options for $3s$, and
similarly (a)--(d) together exclude fewer, than $|G|$ options for $2s$.
Clearly, this conclusion remains valid if $S_{\nu-1}$ is finite. Therefore,
in view of Lemma~\refl{2G3G} we can assume that $|3G|<|G|$. Consequently,
securing (a) at each step of the construction, we have ensured that all
elements of $S_{\nu-1}$ fall into distinct cosets of $3G$, and in particular
each of $g_{\nu-1}+S_{\nu-1}$ and $2g_{\nu-1}-2S_{\nu-1}$ contains at most
one element from $3G$. Since (e) can be re-written as
  $$ 3s \notin (g_{\nu-1}+S_{\nu-1}) \cup (2g_{\nu-1}-S_{\nu-1}), $$
if $|3G|\ge 3$, then there exists $g\in G$ such that every $s\in g+G_3$
satisfies (e). As remarked above, (a)--(d) reduce to forbidding fewer than
$|G|$ values for $2s$; that is, forbidding fewer than $|G|$ cosets of $G_2$
for $s$. Since $|g+G_3|=|G|$ in view of $|3G|<|G|$, and every $G_2$-coset
intersects $g+G_3$ by at most one element, there exists $s\in g+G_3$ with an
admissible value of $2s$, proving the assertion.

Suppose, therefore, that $|3G|<3$. If $|3G|=1$, then $G$ is of exponent $3$,
the case which has been addressed above. If $|3G|=2$, then the identity
$g=-2g+3g$ shows that $G=G_3+3G$, and the sum is direct as if
 $g\in G_3\cap 3G$, then $2g=0$ (as $g\in 3G$ and $|3G|=2$) and $3g=0$ (as
$g\in G_3$), implying $g=0$. Consequently, $G$ is the direct sum of a group
of exponent $3$ and the group of order $2$. This completes the proof.
\end{proof}

Finally, we prove Theorem \reft{exp2}. As indicated in the introduction, the
construction employed in the proof is adopted from \refb{gdt}, where it is
used (in the finite-dimensional case) to find small codes with covering
radius $2$.

\begin{proof}[Proof of Theorem \reft{exp2}]
In view of Lemma~\refl{FF}, it suffices to show that if the field $\F$ of
characteristic $2$ is either finite or algebraically closed, then the group
$\F\times\F$ has a basis with the representation function bounded by $18$.
Clearly, we can assume $|\F|>2$.

We fix $d_1,d_2,d_3\in\F^\times$ with $d_1+d_2+d_3=0$, write
  $$ S_i := \{(x,d_i/x)\colon x\in\F^\times\};\ i\in\{1,2,3\}, $$
and put $S=S_1\cup S_2\cup S_3$. For $(u,v)\in\F\times\F$ let $r(u,v)$ denote
the number of representations of $(u,v)$ as a sum of two elements of $S$, and
for $i,j\in\{1,2,3\}$ denote by $r_{ij}(u,v)$ the number of representations
of $(u,v)$ as a sum of an element of $S_i$ and an element of $S_j$. Since the
sets $S_1,S_2$, and $S_3$ are pairwise disjoint, we have
  $$ r(u,v) = \sum_{i,j=1}^3 r_{ij}(u,v) $$
and furthermore,
  $$  r_{ij}(u,v) = \big| \{ x \in \F \setminus \{0,u\}
                 \colon d_i/x+d_j/(x+u)=v \} \big|;\quad i,j\in\{1,2,3\}. $$
The equation $d_i/x+d_j/(x+u)=v$ can be re-written as
\begin{equation}\label{e:1}
  vx^2+(uv+d_i+d_j)x+d_iu = 0
\end{equation}
and since it has a non-zero coefficient unless $(u,v)\ne (0,0)$, we have
$r_{ij}(u,v)\le 2$, except if $u=v=0$. It follows that $r(u,v)\le 18$ and to
achieve our goal it suffices to show that for any $(u,v)\ne (0,0)$ there are
$i,j\in\{1,2,3\}$ with $r_{ij}(u,v)>0$. We consider three cases.

If $u=0$ and $v\ne 0$, then $r_{12}(u,v)$ is the number of solutions of
$d_1/x+d_2/x=v$, which is $1$.

If $u\ne 0$ and $v=0$, then $r_{12}(u,v)$ is the number of solutions of
$d_1/x=d_2/(x+u)$; this leads to a non-degenerate linear equation, the
solution of which is distinct from both $0$ and $u$.

Finally, suppose that $u\ne 0$ and $v\ne 0$. In this case for $j=i$ equation
\refe{1} takes the form
\begin{equation}\label{e:2}
  vx^2+uvx+d_iu=0,
\end{equation}
and for $r_{ii}(u,v)$ to be non-zero it is necessary and sufficient that
\refe{2} has a solution (which automatically is then distinct from $0$ and
$u$). If $\F$ is algebraically closed, then we are done; suppose, therefore,
that $\F$ is finite. Since \refe{2} can be re-written as
  $$ (x/u)^2+(x/u) = d_i/(uv), $$
it has a solution if and only if $d_i/(uv)$ belongs to the image of the
linear transformation $x\mapsto x+x^2$ of the field $\F$ considered as a
vector space over $\F_2$. The kernel of this transformation is a subspace of
dimension $1$; hence its image is a subspace of $\F$ of co-dimension $1$.
(This image actually is the set of all the elements of $\F$ with zero trace,
but we do not use this fact.) Consequently,
  $$ d_1/(uv) + d_2/(uv) + d_3/(uv) = 0 $$
implies that at least one of $d_1/(uv),\,d_2/(uv)$, and $d_3/(uv)$ is an
element of the image.
\end{proof}

\vfill

\bigskip

\end{document}